\newcommand{\affiliation}{\footnote}
\def\@fnsymbol#1{\ensuremath{\ifcase#1\or *\or \dagger\or \ddagger\or \mathsection\or \|\or **\or \dagger\dagger \or \ddagger\ddagger \else\@ctrerr\fi}}
\definecolor{cblue}{RGB}{0,70,140}
\definecolor{cgreen}{RGB}{100,140,0}
\definecolor{cred}{RGB}{190,10,50}
\setlist{itemsep=0ex,topsep=0ex,parsep=0.4ex}
\renewcommand*{\backref}[1]{}
\renewcommand*{\backrefalt}[4]{
	\ifcase #1 Not cited.%
	\or $\uparrow$#2%
	\else $\uparrow$#2%
	\fi%
}
\theoremstyle{plain}
\newtheorem{theorem}{Theorem}[section]
\newtheorem{lemma}[theorem]{Lemma}
\theoremstyle{definition}
\renewenvironment{proof}[1][\proofname]
{\par\pushQED{\qed}
	\normalfont\topsep6\p@\@plus6\p@\relax\trivlist
	\item[\hskip\labelsep\bfseries#1\@addpunct{.}]
	\ignorespaces}
{\popQED\endtrivlist\@endpefalse}
\newcommand{\eps}{\varepsilon}
\newcommand{\bN}{\mathbb{N}}
\newcommand{\cO}{\mathcal{O}}
\newcommand{\dcol}{d_{\textrm{col}}}
\newcommand{\val}{w}
\DeclarePairedDelimiter{\abs}{\lvert}{\rvert}
\DeclarePairedDelimiter{\ceil}{\lceil}{\rceil}
\title{An improved upper bound for the multicolour Ramsey number of odd cycles}
\author{Maria Axenovich\affiliation{Institute of Algebra and Geometry, Karlsruhe Institute of Technology, Germany (\textsf{\href{mailto:maria.aksenovich@kit.edu}{maria.aksenovich@kit.edu}}).} \and Wouter Cames van Batenburg\affiliation{D\'epartement d'Informatique, Universit\'e libre de Bruxelles, Belgium (\textsf{\href{mailto:w.p.s.camesvanbatenburg@gmail.com}{w.p.s.camesvanbatenburg@gmail.com}}). Supported by the Belgian National Fund for Scientific Research (FNRS).} \and Oliver Janzer\affiliation{Institute of Mathematics, EPFL, Lausanne, Switzerland (\textsf{\href{mailto:oliver.janzer@epfl.ch}{oliver.janzer@epfl.ch}}).} \and Lukas Michel\affiliation{Mathematical Institute, University of Oxford, United Kingdom (\textsf{\href{mailto:lukas.michel@maths.ox.ac.uk}{lukas.michel@maths.ox.ac.uk}}).} \and Mathieu Rundstr\"{o}m\affiliation{Department of Combinatorics and Optimization, University of Waterloo, Waterloo, Canada (\textsf{\href{mailto:mrundstrom@uwaterloo.ca}{mrundstrom@uwaterloo.ca}}).}}
\date{20 October 2025}
\begin{document}
    \maketitle

    \begin{abstract}
        We show that the $k$-colour Ramsey number of an odd cycle of length $2 \ell + 1$ is at most $(4 \ell)^k \cdot k^{k/\ell}$. This proves a conjecture of Fox and is the first improvement in the exponent that goes beyond an absolute constant factor since the work of Bondy and Erd\H{o}s from 1973.
    \end{abstract}
    
    \section{Introduction}
    
    The \emph{$k$-colour Ramsey number} $R_k(H)$ of a graph $H$ is the smallest integer $n$ such that every $k$-edge-colouring of the complete graph $K_n$ contains a monochromatic copy of $H$. For the triangle, the notoriously difficult Schur-Erd\H{o}s problem asks to determine the growth rate of $R_k(C_3)$. In 1916, Schur \cite{S16} showed that
    \[
        \Omega(2^k) \le R_k(C_3) \le \cO(k!).
    \]
    Since then, the upper bound has remained unchanged up to small improvements to the constant factor, while the lower bound has been improved to $\Omega(3.28^k)$ by Ageron, Casteras, Pellerin, Portella, Rimmel, and Tomasik \cite{ACPPRT21}. Erd\H{o}s conjectured that $R_k(C_3) = 2^{\Theta(k)}$ \cite{CG98} and offered monetary awards for proving this conjecture and solving some related problems.
    
    For longer odd cycles, Bondy and Erd\H{o}s~\cite{BE73} and Erd\H{o}s and Graham \cite{EG73} obtained the bounds
    \[
        \ell \cdot 2^k + 1 \le R_k(C_{2\ell+1}) \le 2 \ell \cdot (k+2)!.
    \]
    If $k$ is fixed, it turns out that the lower bound is sharp as Jenssen and Skokan~\cite{JS21} proved that $R_k(C_{2\ell+1}) = \ell \cdot 2^k + 1$ for all sufficiently large $\ell$. However, this is not true if $\ell$ is fixed. Indeed, Day and Johnson \cite{DJ17} showed that for all $\ell$ there exists some $\delta \coloneqq \delta(\ell) > 0$ such that $R_k(C_{2\ell+1}) \ge 2 \ell \cdot (2+\delta)^{k-1}$ for all sufficiently large $k$.

    Usually, if the number of colours is large, longer odd cycles should be easier to find than shorter odd cycles. For instance, Fox \cite{F} conjectured that for every $\eps > 0$ there exists some $\ell$ such that $R_k(C_{2\ell+1}) \le k^{\eps k}$ for all sufficiently large $k$. Li \cite{Li09} even made the stronger conjecture that $R_k(C_{2\ell+1}) \le o(k!^{1/\ell})$ as $k \to \infty$. Nevertheless, the gap between these conjectures and the best upper bounds known remained large. Li \cite{Li09} proved that $R_k(C_5) \le c^k \cdot \sqrt{k!}$ for some constant $c$, which was later extended by Lin and Chen \cite{LC19} to longer odd cycles by showing that for $\ell \ge 2$ we have $R_k(C_{2\ell+1}) \le c^k \cdot \sqrt{k!}$ for some constant $c$ that only depends on $\ell$.\footnote{These results were proved independently by Fox \cite{F}, but were never published.} Only under the wide open additional assumption that each Ramsey graph for $R_k(C_{2\ell+1})$ is nearly regular\footnote{That is, there is an absolute constant $\eps$ such that for all sufficiently large $k$, the minimum degree of every Ramsey graph for $R_k(C_{2\ell+1})$ is at least an $\eps$-fraction of its average degree.}, Li \cite{Li09} showed that this bound could be improved to $R_k(C_{2\ell+1}) \le c^k \cdot k!^{1/\ell}$.
    
    In this paper, we prove the conjecture of Fox.

    \begin{theorem}\label{thm:multicolourramseyoddcycle}
        For $k,\ell \in \bN$,
        \[
            R_k(C_{2\ell+1}) \le (4 \ell - 2)^k \cdot k^{k/\ell} + 1.
        \]
    \end{theorem}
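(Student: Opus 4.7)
The most natural strategy is induction on the number of colors $k$. The base case $k = 1$ is immediate since $R_1(C_{2\ell+1}) = 2\ell + 1 \le 4\ell - 1$. For the inductive step, my plan is to establish a recursive inequality of roughly the form
\[
    R_k(C_{2\ell+1}) \le (4\ell-2) \cdot k^{1/\ell} \cdot R_{k-1}(C_{2\ell+1}),
\]
which iterated and combined with the standard estimate $(k!)^{1/\ell} \le k^{k/\ell}$ yields the stated bound.

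To carry out the inductive step, I would consider a $k$-coloring of $K_n$ with $n$ just above $(4\ell-2)\, k^{1/\ell}\, R_{k-1}(C_{2\ell+1})$, assume it contains no monochromatic $C_{2\ell+1}$, and aim to extract a subset $S$ of size at least $R_{k-1}(C_{2\ell+1})$ on which some color class has no edges. Then $K_n[S]$ would use only $k-1$ colors, and the inductive hypothesis would yield the desired monochromatic $C_{2\ell+1}$. The starting point is to pick a vertex $v$ and a color-$c$ neighborhood $N_c(v)$ of size $\ge (n-1)/k$; inside $N_c(v)$, color $c$ cannot contain a path on $2\ell$ vertices, since closing such a path through $v$ via two color-$c$ edges would already produce a monochromatic $C_{2\ell+1}$. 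By the Erd\H{o}s--Gallai theorem the color-$c$ subgraph of $N_c(v)$ has at most $(\ell-1)|N_c(v)|$ edges, and since $P_{2\ell}$-free graphs have chromatic number at most $2\ell-1$, it admits an independent set of size at least $|N_c(v)|/(2\ell-1)$.

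The direct argument above only yields $R_k(C_{2\ell+1}) \le (2\ell-1)\,k\, R_{k-1}(C_{2\ell+1})$, which is essentially the classical Bondy--Erd\H{o}s bound and loses a full factor of $k$ per color. The main obstacle --- and the heart of the improvement --- is to refine the neighborhood reduction so that each color costs only a factor of $k^{1/\ell}$ rather than $k$. I anticipate that this is accomplished by iterating the neighborhood-extraction roughly $\ell$ times, producing a nested chain of sets on which color $c$ is successively forbidden from containing paths of shorter and shorter length, so that after $\ell$ iterations the color class is essentially edgeless on an innermost set of size $\sim n/k$. The amortization here is delicate because one needs the total shrinkage across the $\ell$ iterations to be $k$, not $k^\ell$; this is where the constants of type $4\ell - 2 = 2(2\ell - 1)$ naturally arise. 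An alternative route would be an Andr\'asfai--Erd\H{o}s--S\'os-type stability argument forcing any dense $C_{2\ell+1}$-free color class to be nearly bipartite, followed by extracting one side of the bipartition. Either way, the principal novelty --- relative to Li's earlier work, which obtained a comparable $k^{k/\ell}$-type bound under a regularity assumption on color classes --- lies in handling highly irregular colorings in which minimum and average degrees of individual color classes can differ drastically, and controlling the resulting imbalance throughout the iteration is the most technically demanding step.
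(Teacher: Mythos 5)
Your proposal correctly assembles the easy ingredients --- the base case, the first-neighbourhood $P_{2\ell}$-freeness, the chromatic number bound $2\ell-1$ (whence $4\ell-2$), and the observation that the naive extraction loses a factor of $(4\ell-2)k$ per colour --- but it stops exactly at the point where the actual work begins, and the mechanism you sketch for closing the gap is not the one that works. The step you defer (``each colour costs only $k^{1/\ell}$ rather than $k$'') is not achieved in the paper by a global induction on $k$ via a recursion $R_k \le (4\ell-2)k^{1/\ell}R_{k-1}$, and your plan of iterating the neighbourhood extraction $\ell$ times to forbid shorter and shorter paths does not address the real obstruction: in the worst case the colour-$c$ BFS ball around $v$ stops growing already at the first layer, and then the largest colour-$c$-free set you can extract has size only about $n/((4\ell-2)k)$, so no cardinality-based recursion in $k$ recovers the factor $k^{1/\ell}$.

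The paper's key idea, which is absent from your proposal, is to replace cardinality by the weight $\val(v) = (\chi\cdot k^{1/\ell})^{-\dcol(v)}$, where $\dcol(v)$ is the number of colours incident to $v$, and to induct on the number of \emph{vertices} rather than on the number of colours, proving that the total weight never exceeds $1$. Two things then happen. First, a pigeonhole over the $\ell$ BFS layers $N_c^{\le 1}(v)\subseteq\cdots\subseteq N_c^{\le \ell+1}(v)$ of a single colour $c$ from a single vertex $v$ shows that some layer grows by a factor less than $k^{1/\ell}$ in weight (otherwise the ball would exceed the whole graph); this is where $k^{1/\ell}$ enters, not through forbidding shorter paths. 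Second, when the colour class $S'$ is extracted and the surrounding layers are deleted, the surviving vertices of $S'$ have \emph{lost} colour $c$, so their weights each jump by a factor of $\chi k^{1/\ell}$, and this gain exactly compensates the weight of the deleted vertices. That local, per-vertex amortization over colour degrees is precisely the device that ``controls the imbalance'' of irregular colourings that you correctly identify as the crux but leave unresolved; it also requires the chromatic number bound on the whole ball $N_c^{\le \ell}(v)$ (via Erd\H{o}s--Faudree--Rousseau--Schelp, since layers $N_c^i(v)$ with $i\ge 2$ can span colour-$c$ edges), not just on the first neighbourhood. As written, your argument proves only the classical $R_k \le (2\ell-1)k\,R_{k-1}$ bound, and the passage from there to the theorem is a genuine gap rather than a routine elaboration.
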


    Using the well-known inequality $(k/e)^k < k!$, this bound implies that $R_k(C_{2\ell+1}) \le c^k \cdot k!^{1/\ell} + 1$ for $c \coloneqq (4 \ell - 2) e^{1/\ell}$, and so this establishes the conditional upper bound of Li unconditionally.
    
    In addition to finding a monochromatic odd cycle of a specific length, there has also been considerable interest in finding any short monochromatic odd cycle. It is easy to see that there is a $k$-edge-colouring of $K_{2^k}$ without any monochromatic odd cycles, but that every $k$-edge-colouring of $K_{2^k+1}$ contains such a cycle. Motivated by this, in 1973, Erd\H{o}s and Graham~\cite[Question (iii) in Section 6]{EG73} asked for the smallest integer $L(k)$ such that every $k$-edge-colouring of $K_{2^k+1}$ contains a monochromatic odd cycle of length at most $L(k)$.
    
    Day and Johnson~\cite{DJ17} proved that $L(k) \ge 2^{\Omega(\sqrt{\log k})}$. Recently, Gir\~ao and Hunter~\cite{GH24} obtained the first non-trivial upper bound, showing that $L(k) \le (2^k+1) / k^{1-o(1)}$. Using an algebraic approach, Janzer and Yip~\cite{JY25} improved this to $L(k) \le \cO(k^{3/2} \cdot 2^{k/2})$.
    
    Both Gir\~ao and Hunter \cite{GH24} and Janzer and Yip \cite{JY25} also discussed the more general problem of finding short monochromatic odd cycles in colourings of complete graphs with more vertices. In the regime where the number of vertices is $(2 + \delta)^k$ for some small $\delta > 0$, the methods of both papers can guarantee a monochromatic odd cycle of length at most $\cO_\delta(k)$. The dependence on $\delta$ is better in \cite{JY25}, where the length of the cycle is $\cO(\delta^{-1/2} \cdot k)$ if $\delta < 1$. With our method, we can find significantly shorter monochromatic odd cycles, unless $\delta$ tends to $0$ very quickly.

    \begin{theorem}\label{thm:shortmonochromaticoddcycle}
        For $k \in \bN$ and $b > 2$, every $k$-edge-colouring of $K_n$ with $n > b^k$ contains a monochromatic odd cycle of length at most $2 \ceil{\log_{b/2} k} + 1$.
    \end{theorem}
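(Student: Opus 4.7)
The plan is to prove Theorem~\ref{thm:shortmonochromaticoddcycle} by induction on $k$, with $\ell = \lceil \log_{b/2} k \rceil$ held fixed throughout (so that $(b/2)^\ell \ge k$); the base cases are handled directly. For the inductive step, I would suppose towards contradiction that $K_n$, with $n > b^k$, is $k$-coloured without any monochromatic odd cycle of length at most $2\ell+1$.

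The core reduction aims to extract a subset $V' \subseteq V$ of size at least $n/b$ which is independent in the colour-$c$ subgraph $G_c$ for some colour $c$. The restriction of the colouring to $V'$ then uses only $k-1$ colours and satisfies $|V'| > b^{k-1}$; combined with $\lceil \log_{b/2}(k-1) \rceil \le \ell$, the induction hypothesis produces the desired monochromatic odd cycle of length at most $2\ell+1$ inside $V'$, giving a contradiction. To build $V'$, pick a vertex $v_0 \in V$ and, for each colour $c$, perform breadth-first search from $v_0$ in $G_c$, yielding BFS levels $L_0^c = \{v_0\}, L_1^c, \ldots, L_\ell^c$. Under the no-short-odd-cycle assumption, each $L_i^c$ with $i \le \ell$ is $c$-independent: an edge within $L_i^c$ would close a monochromatic odd cycle of length at most $2i+1 \le 2\ell+1$ in colour $c$. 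Let $A^c$ denote the union of even-indexed BFS levels up through $L_\ell^c$ and $B^c$ the corresponding union of odd-indexed levels; both $A^c$ and $B^c$ are $c$-independent, and together they cover the BFS ball $B_\ell^c(v_0) = A^c \cup B^c$, so the larger of the two has size at least $|B_\ell^c(v_0)|/2$.

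The hard part is then the key estimate: showing that for some choice of $v_0$ and colour $c$ we have $|B_\ell^c(v_0)| \ge 2n/b$. The naive pigeonhole bound---every $u \ne v_0$ lies in $B_\ell^{c^*}(v_0)$ for $c^*$ the colour of $v_0u$, whence $\sum_c |B_\ell^c(v_0)| \ge n + k - 1$---only yields some ball of size at least $n/k$, which falls short of $2n/b = n/k^{1/\ell}$ when $\ell \ge 2$. To close this gap I would exploit BFS expansion across the $\ell$ layers: since each $L_i^c$ with $i \le \ell$ is $c$-independent, once $v_0$ has high $c$-degree the ball $B_\ell^c(v_0)$ accumulates mass across its $\ell$ levels, and a careful averaging over $v_0$ (or a cleverly chosen $v_0$) should produce the required large ball. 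Alternatively, one can classify each $u$ by the signature $\phi(u) = (\phi_c(u))_{c \in [k]} \in \{A, B, X\}^k$, where $\phi_c(u)$ records the parity of $d_c(v_0, u)$ when that distance is at most $\ell$ and is $X$ otherwise; edges within the class $\phi^{-1}(\sigma)$ then use only the colours $\{c : \sigma_c = X\}$, so an inductive bound on each $\phi^{-1}(\sigma)$---combined with a careful count of signatures exploiting that every $\phi(u)$ for $u \ne v_0$ must contain at least one $B$-coordinate---should close the argument and give $n \le b^k$.
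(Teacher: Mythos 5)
Your reduction skeleton (BFS levels in colour $c$ from $v_0$, each level $N_c^i(v_0)$ with $i \le \ell$ being $c$-independent, hence the ball $N_c^{\le \ell}(v_0)$ bipartite in colour $c$) matches the paper's setup exactly. But the step you yourself flag as ``the hard part'' is a genuine gap, and it is precisely where the paper does something different. Your recursion needs a single colour-$c$-independent set of size at least $n/b \ge n/(2k^{1/\ell})$, hence a ball of size at least $n/k^{1/\ell}$; as you observe, pigeonhole only gives $n/k$. Neither of your two proposed fixes closes this. The ``BFS expansion'' idea does not obviously produce a large ball: the correct dichotomy is only that \emph{either} some level fails to expand by a factor $k^{1/\ell}$ \emph{or} the ball gets large, and in the first case you are left with a possibly small ball of size only about $n/k$. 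The signature idea fails quantitatively: classifying vertices by $(\phi_c(u))_c \in \{A,B,X\}^k$ and bounding each class $\phi^{-1}(\sigma)$ inductively by $b^{\#\{c:\sigma_c = X\}}$ gives $n \le \sum_{j}\binom{k}{j}2^{k-j}b^j = (b+2)^k$, and subtracting the signatures with no $B$-coordinate still leaves roughly $(b+2)^k - (b+1)^k \gg b^k$.

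The paper avoids needing any single large independent set. Its key lemma assigns each vertex the weight $(2k^{1/\ell})^{-\dcol(v)}$, where $\dcol(v)$ is the number of colours incident to $v$, and shows by induction on $\abs{V(G)}$ that the total weight is at most $1$. The expansion dichotomy is run in terms of \emph{weight}: some level $i \le \ell$ satisfies $\val(N_c^{i+1}(v)) \le (k^{1/\ell}-1)\cdot\val(N_c^{\le i}(v))$, since otherwise the ball's weight would exceed $k$ times that of $N_c^{\le 1}(v) $, which exceeds the whole graph's weight. One then deletes only the thin boundary layer $N_c^{i+1}(v)$ together with the smaller part of the bipartition of $N_c^{\le i}(v)$, keeping everything else; the retained part of the ball loses colour $c$, so its weight jumps by the factor $2k^{1/\ell}$, which exactly compensates the deletions. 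This amortised ``delete a little, gain a lot'' recursion is what replaces your ``restrict to a large $c$-independent set and drop a colour'' step, and without some substitute of this kind your argument does not go through.
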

    
    In particular, if $b = 2 + \delta$ for $0 < \delta < 1$, this result guarantees a cycle of length $\cO(\delta^{-1} \cdot \log k)$. In addition, the result also applies to larger values of $b$. If $b = k^\eps$, this yields a result similar to \cref{thm:multicolourramseyoddcycle}, but without controlling the exact cycle length. We remark that \cref{thm:shortmonochromaticoddcycle} does not give any non-trivial result for the Erd\H os--Graham problem, where $\delta \approx 1 / (k \cdot 2^{k-1})$.
    
    \textbf{Notation.} For an edge-coloured graph, let $N_c^i(v)$ be the set of all vertices $u$ such that the shortest path of colour $c$ from $v$ to $u$ has length $i$, and write $N_c(v) \coloneqq N_c^1(v)$ and $N_c^{\le \ell}(v) \coloneqq \bigcup_{i=0}^\ell N_c^i(v)$. For an uncoloured graph, $N^i(v)$ denotes the set of vertices at distance exactly $i$ from $v$.

    \section{Neighbourhoods with small chromatic number}

     In~\cref{sec:corollaries}, we prove \cref{thm:multicolourramseyoddcycle,thm:shortmonochromaticoddcycle} using the following key lemma. This result bounds the number of vertices of a $k$-edge-coloured complete graph as long as for every vertex $v$ and every colour $c$, the subgraph of colour $c$ induced by the neighbourhood $N_c^{\le \ell}(v)$ has small chromatic number (which is the case if the colouring does not contain a monochromatic odd cycle of length $2\ell+1$). In fact, we prove this lemma for \emph{$k$-local-edge-colourings}, which are edge-colourings where at most $k$ colours are incident to each vertex.
    
    \begin{lemma}\label{thm:neighbourhoodchromaticsizebound}
        Let $k, \ell, \chi \in \bN$. Consider a $k$-local-edge-colouring of a complete graph $K_n$ such that for every vertex $v \in V(K_n)$ and every colour $c$, the subgraph of colour $c$ induced by $N_c^{\le \ell}(v)$ in $K_n$ has chromatic number at most $\chi$. Then, $n \le \chi^k \cdot k^{k/\ell}$.
    \end{lemma}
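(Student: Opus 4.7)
My plan is to prove the lemma by induction on $k$. The base case $k = 0$ gives $n \le 1$, matching the bound $\chi^0 \cdot 0^{0/\ell}$ under the convention $0^0 = 1$.

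For the inductive step, the core reduction peels off one colour at a time. Fix a vertex $v$ and a colour $c$ incident to $v$. The hypothesis provides a proper $\chi$-colouring of the colour-$c$-subgraph on $N_c^{\le \ell}(v)$, so by pigeonhole there is a colour-$c$-independent set $I \subseteq N_c^{\le \ell}(v)$ with $|I| \ge |N_c^{\le \ell}(v)|/\chi$. Every vertex of $I$ lies in the colour-$c$-component of $v$ and so is incident to a colour-$c$ edge in the original graph; inside $I$ colour $c$ is absent, so the restricted edge-colouring on $I$ is $(k-1)$-local. The chromatic condition is inherited by induced subgraphs (of $K_n$, then of BFS balls in the restricted colouring), so the inductive hypothesis applies to $I$ and gives $|I| \le \chi^{k-1}(k-1)^{(k-1)/\ell}$. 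Rearranging, $|N_c^{\le \ell}(v)| \le \chi^{k}(k-1)^{(k-1)/\ell}$ for every choice of $v$ and $c$ at $v$.

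To conclude, I would like to pick $v$ and $c$ so that this BFS ball also covers a substantial fraction of $V$, namely $|N_c^{\le \ell}(v)| \ge n/k^{1/\ell}$. Combined with the upper bound above, this would give $n \le k^{1/\ell} \cdot \chi^{k}(k-1)^{(k-1)/\ell} \le \chi^{k} k^{k/\ell}$, via the elementary inequality $k \cdot (k-1)^{k-1} \le k^{k}$. The hard part will be to establish the existence of such $v$ and $c$: the naive pigeonhole based on the partition $V \setminus \{v\} = \bigsqcup_{c} N_c(v)$ over the $\le k$ colours at $v$ only yields $\max_c |N_c^{\le \ell}(v)| \ge (n-1)/k$, falling short of $n/k^{1/\ell}$ by a factor of $k^{(\ell-1)/\ell}$.

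To close this gap one must exploit the BFS depth $\ell$. My intended approach is a case analysis: if some BFS ball $|N_c^{\le \ell}(v)|$ already reaches $n/k^{1/\ell}$, we are done immediately; otherwise all BFS balls lie strictly below this threshold, so for every colour the colour-$c$-BFS from every vertex expands slowly through the layers $L_1^c(v), L_2^c(v), \ldots, L_\ell^c(v)$. In this constrained regime I would analyse how these $\ell$ BFS layers interact with the $\chi$-colouring of the colour-$c$-subgraph on $N_c^{\le \ell}(v)$, amortising over depth what the naive argument spends on colours. A plausible implementation would average over pairs $(v,c)$ (or over colour-$c$ paths of length $\ell$ starting at $v$), using that each such path contributes simultaneously to several BFS balls, in order to locate a pair where the BFS ball is large enough. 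Pulling this off would reduce $n$ by a factor of $\chi k^{1/\ell}$ per colour rather than the naive $\chi k$, completing the induction.
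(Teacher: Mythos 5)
There is a genuine gap, and it sits exactly where you flag it: the existence of a pair $(v,c)$ with $\abs{N_c^{\le \ell}(v)} \ge n/k^{1/\ell}$ is not only unproven but false in general. Take a proper edge-colouring of $K_n$ ($n$ even), so that every colour class is a perfect matching; this is a $k$-local-edge-colouring with $k = n-1$ satisfying the hypothesis with $\chi = 2$, yet $N_c^{\le \ell}(v)$ consists of $v$ and its colour-$c$ match only, so every ball has exactly two vertices, whereas $n/k^{1/\ell} = n/(n-1)^{1/\ell}$ is of order $n^{1-1/\ell}$ and is unbounded for $\ell \ge 2$. Hence no averaging over pairs $(v,c)$ or over colour-$c$ paths can produce the ball you need, and the case analysis you propose is circular: in the ``otherwise'' branch, where all balls are assumed small, your plan is still to ``locate a pair where the BFS ball is large enough'', which contradicts the case assumption. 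The first half of your argument (passing to a colour-$c$-independent set $I$ inside a ball, noting that the colouring restricted to $I$ is $(k-1)$-local, and applying induction on $k$) is sound, but by itself it only bounds $\abs{N_c^{\le \ell}(v)}$, not $n$.

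The paper circumvents precisely this obstacle by working with weights rather than cardinalities: each vertex gets weight $\val(v) = (\chi \cdot k^{1/\ell})^{-\dcol(v)}$, and the induction is on the number of vertices with the goal $\val(V(G)) \le 1$. In the weighted world the analogue of your missing step is true: choosing $c$ to maximise $\val(N_c(v))$ gives $\val(N_c^{\le 1}(v)) > \val(V(G))/k$, and a pigeonhole over the $\ell$ BFS layers produces an index $i$ with $\val(N_c^{i+1}(v)) \le (k^{1/\ell}-1)\cdot\val(N_c^{\le i}(v))$. One then deletes the layer $N_c^{i+1}(v)$ together with all but the heaviest colour class $S'$ of the $\chi$-colouring of $S = N_c^{\le i}(v)$; colour $c$ then no longer touches $S'$, so each surviving vertex of $S'$ gains a factor $\chi \cdot k^{1/\ell}$ in weight, which compensates exactly for the deleted weight. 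If you want to salvage your approach, the lesson is that the quantity a single ball must capture has to be reweighted so that vertices incident to few colours count for more; with raw cardinalities the intermediate statement you need simply fails.
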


    \begin{proof}
        Let $G \coloneqq K_n$. Define the weight of a vertex $v \in V(G)$ as $\val(v) \coloneqq (\chi \cdot k^{1/\ell})^{-\dcol(v)}$ where $\dcol(v)$ denotes the number of colours incident to $v$, and define the weight of a subset of vertices $U \subseteq V(G)$ as $\val(U) \coloneqq \sum_{v \in U} \val(v)$. Note that $\val(v) \ge \chi^{-k} \cdot k^{-k/\ell}$ for every vertex $v \in V(G)$, and so $\val(V(G)) \ge n \cdot \chi^{-k} \cdot k^{-k/\ell}$. To show that $n \le \chi^k \cdot k^{k/\ell}$, it therefore suffices to prove that $\val(V(G)) \le 1$. We will prove this by induction on the number of vertices of $G$. The case $\abs{V(G)} = 1$ is trivial, so suppose that $\abs{V(G)} \ge 2$.
    
        Let $v \in V(G)$ be arbitrary. Since at most $k$ colours are incident to $v$, there exists a colour $c$ such that $\val(N_c(v)) \ge \val(V(G) \setminus \{v\})/k$, and so $\val(N_c^{\le 1}(v)) = \val(\{v\} \cup N_c(v)) > \val(V(G))/k$. We claim that there exists some $i \in [\ell]$ such that $\val(N_c^{i+1}(v)) \le (k^{1/\ell} - 1) \cdot \val(N_c^{\le i}(v))$. Indeed, otherwise we have $\val(N_c^{\le i+1}(v)) = \val(N_c^{\le i}(v)) + \val(N_c^{i+1}(v)) \ge k^{1/\ell} \cdot \val(N_c^{\le i}(v))$ for all $i \in [\ell]$ and so $\val(N_c^{\le \ell+1}(v)) \ge (k^{1/\ell})^\ell \cdot \val(N_c^{\le 1}(v)) = k \cdot \val(N_c^{\le 1}(v)) > \val(V(G))$, a contradiction.

        Let $S \coloneqq N_c^{\le i}(v)$ and $T \coloneqq N_c^{i+1}(v)$. In particular, $\val(T) \le (k^{1/\ell} - 1) \cdot \val(S)$. By assumption, the subgraph of colour $c$ induced by $S$ in $G$ has chromatic number at most $\chi$. Fix such a $\chi$-vertex-colouring of $S$ and let $S' \subseteq S$ be one of its colour classes with maximum weight. Then, $S'$ spans no edge of colour $c$ and satisfies $\val(S') \ge \val(S)/\chi$.

        Delete all vertices of $T \cup (S \setminus S')$ from $G$ and update the weights of the remaining vertices. Note that the weight of every vertex in $S'$ increases by a multiplicative factor of at least $\chi \cdot k^{1/\ell}$ since colour $c$ is no longer incident to any of these vertices. On the other hand, the weight of every vertex in $V(G) \setminus (T \cup S)$ is either unchanged or increases. Therefore, the weight of the entire graph increases by at least
        \[
            (\chi \cdot k^{1/\ell}) \cdot \val(S') - \val(S) - \val(T) \ge k^{1/\ell} \cdot \val(S) - \val(S) - (k^{1/\ell} - 1) \cdot \val(S) = 0.
        \]
        Since we know by the induction hypothesis that the weight of the new graph is at most $1$, it follows that the weight of the original graph was at most $1$.
    \end{proof}

    \section{Consequences for short monochromatic odd cycles}\label{sec:corollaries}

    To deduce \cref{thm:multicolourramseyoddcycle,thm:shortmonochromaticoddcycle} for cycles of length at most $2\ell +1$, it remains to bound the chromatic number of the neighbourhoods $N_c^{\le \ell}(v)$. This is very easy for \cref{thm:shortmonochromaticoddcycle} since none of the neighbourhoods $N_c^i(v)$ for $i \le \ell$ can span an edge of colour $c$.

    \begin{proof}[Proof of \cref{thm:shortmonochromaticoddcycle}]
        Let $\ell \coloneqq \ceil{\log_{b/2} k}$. Note that if a $k$-edge-colouring of $K_n$ contains no monochromatic odd cycle of length at most $2 \ell + 1$, then for every vertex $v \in V(K_n)$, every colour $c$, and every $i \in [\ell]$ it holds that $N_c^i(v)$ spans no edge of colour $c$. This implies that the subgraph of colour $c$ induced by $N_c^{\le \ell}(v)$ in $K_n$ is bipartite. By \cref{thm:neighbourhoodchromaticsizebound}, it follows that $n \le 2^k \cdot k^{k/\ell} \le 2^k \cdot (b/2)^k = b^k$.
    \end{proof}

    To prove \cref{thm:multicolourramseyoddcycle}, we use a known bound on the chromatic number of the subgraph induced by $N^i(v)$ for $i \le \ell$ in graphs that contain no cycle of length $2 \ell + 1$.

    \begin{proof}[Proof of \cref{thm:multicolourramseyoddcycle}]
        An argument of Erd\H{o}s, Faudree, Rousseau, and Schelp~\cite{EFRS78} shows that if a graph $G$ contains no cycle of length $2 \ell + 1$, then for every vertex $v \in V(G)$ and every $i \in [\ell]$ it holds that $G[N^i(v)]$ has chromatic number at most $2 \ell - 1$. So, if a $k$-edge-colouring of $K_n$ contains no monochromatic cycle of length $2 \ell + 1$, then for every vertex $v \in V(K_n)$ and every colour $c$, the subgraph of colour $c$ induced by $N_c^{\le \ell}(v)$ in $K_n$ has chromatic number at most $4 \ell - 2$. By \cref{thm:neighbourhoodchromaticsizebound}, it follows that $n \le (4 \ell - 2)^k \cdot k^{k/\ell}$.
    \end{proof}

    \textbf{Acknowledgements.} We thank David Conlon and Jacob Fox for their helpful comments and for pointing out some relevant references. This work was initiated at the ``Topics in Ramsey theory'' online workshop of the Sparse Graphs Coalition. We thank Stijn Cambie, Nemanja Dragani\'c, Ant\'onio Gir\~ao, Eoin Hurley, and Ross Kang for organising this event.
    
    {
    \hypersetup{linkcolor=cred}
    \newcommand{\etalchar}[1]{$^{#1}$}
    
    }
\end{document}